\documentclass{article}
\usepackage{amsmath}
\usepackage{amsthm}
\usepackage{graphicx}
\usepackage{geometry}
\usepackage{hyperref}
\usepackage{booktabs}
\usepackage{amsfonts} 
\theoremstyle{plain}
\newtheorem{proposition}{Proposition}

\usepackage{pdfpages}

\title{A golden–ratio partition of information and the balance between prediction and surprise: a neuro–cognitive route to antifragility}
\author{Pablo Padilla$^{1}$ \and Oliver López-Corona$^{2,*}$ \and Elvia Ram\'irez-Carrillo$^{3}$ \and Ariadne Hern\'andez S\'anchez$^{4}$}
\date{\today}

\begin{document}
\maketitle
\vspace{-0.8cm}
\begin{center}
\small $^{1}$Instituto de Investigaciones en Matemáticas Aplicadas y Sistemas (IIMAS), Universidad Nacional Autónoma de México (UNAM), Mexico City, Mexico
\small $^{2}$Investigadores por M\'exico (IxM-Secihti), Instituto de Investigaciones en Matemáticas Aplicadas y Sistemas (IIMAS), Universidad Nacional Autónoma de México (UNAM), Mexico City, Mexico
\small $^{3}$Facultad de Ciencias, Universidad Nacional Autónoma de México (UNAM), Mexico City, Mexico
\small $^{4}$ Facultad de Estudios Superiores Iztacala, Universidad Nacional Autónoma de México (UNAM), Mexico City, Mexico
\small $^{*}$Corresponding author: oliver.lopez@iimas.unam.mx
\end{center}

\begin{abstract}

Adaptive systems must strike a balance between prediction and surprise to thrive in uncertain environments. We propose an information-theoretic balance function, $ f(p) = -(1 - p)\ln(1 - p) + \ln p $, which quantifies the net informational gain from contrasting explained variance $p$ with unexplained novelty $(1 - p$. This function is strictly concave on $(0,1)$ and reaches its unique maximum at $ p^* \approx 0.882$, revealing a regime where confidence is high but the residual uncertainty carries a disproportionate potential for surprise.

Independently of this maximum, imposing a self-similarity condition between known, unknown and total information, $p : (1-p) = 1 : p$, leads to the golden-ratio reciprocal $p = 1/\varphi \approx 0.618$, where $ \varphi$ is the golden ratio. We interpret this value not as the maximizer of  $f$, but as a structurally privileged \emph{partition} in which known and unknown are proportionally nested across scales.

Embedding this dual structure into a Compute-Inference-Model-Action (CIMA) loop yields a dynamic process that maintains the system near a critical regime where prediction and surprise coexist. At this edge, neuronal dynamics exhibit power-law structure and maximal dynamic range, while the system’s response to perturbations becomes convex at the level of its payoff function-fulfilling the formal definition of antifragility. We suggest that the golden-ratio partition is not merely a mathematical artifact, but a candidate design principle linking prediction, surprise, criticality, and antifragile adaptation across scales and domains, while the maximum of $f$ identifies the point of greatest informational vulnerability to being wrong.
\end{abstract}

\section*{An Informational Balance Equation}

This manuscript presents a theoretical contribution aimed at clarifying general principles by which biological systems regulate the balance between prediction and surprise under uncertainty. By introducing a formally defined information-theoretic balance function and identifying structurally privileged partitions of explained and unexplained variance, the work addresses a central question in theoretical biology and neuroscience: how adaptive systems maintain sensitivity to novelty while preserving internal coherence. The framework is intentionally abstract but grounded in biologically interpretable quantities (see supplementary materials) such as prediction error, explained variance, precision weighting, and learning progress, allowing the proposed constructs to be mapped onto neural and cognitive processes without reliance on detailed mechanistic assumptions \cite{friston2010}.

Let us consider the following informational balance equation (with ln the natural logarithm)

\begin{equation}\label{eq:balance}
f:(0,1)\to\mathbb{R}, \qquad
f(p) = -(1 - p)\ln(1 - p) + \ln p .
\end{equation}

\begin{proposition}[Strict concavity]
The function $f$ is strictly concave on $(0,1)$.
\end{proposition}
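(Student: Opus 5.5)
Since $f$ is twice continuously differentiable on the open interval $(0,1)$, the plan is to compute $f''$ directly and show it is strictly negative there. A $C^2$ function on an interval whose second derivative is everywhere negative is strictly concave. One way to see this: $f'$ is then strictly decreasing, so for $x<y$ and $t\in(0,1)$ the mean value theorem applied on $[x,z]$ and $[z,y]$, with $z=tx+(1-t)y$, gives $f(z)>tf(x)+(1-t)f(y)$.

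For the first derivative, differentiate the two terms of \eqref{eq:balance} separately. For $g(p)=-(1-p)\ln(1-p)$, the product rule gives
\[
g'(p)=\ln(1-p)+1 .
\]
For $\ln p$ the derivative is $1/p$. Hence
\[
f'(p)=\ln(1-p)+1+\frac{1}{p}.
\]

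Differentiating once more gives
\[
f''(p)=-\frac{1}{1-p}-\frac{1}{p^{2}} .
\]
For $p\in(0,1)$ both $1-p$ and $p^2$ are positive, so each term is strictly negative. Therefore $f''(p)<0$ on all of $(0,1)$, with no exceptional points, and strict concavity follows from the criterion above.

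There is no substantive obstacle. The only step needing care is the sign bookkeeping in differentiating $-(1-p)\ln(1-p)$, where two minus signs from the chain rule combine. I would double-check it by writing $u=1-p$ and differentiating $-u\ln u$ with respect to $u$, which gives $-\ln u-1$, and then multiplying by $du/dp=-1$.

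As a by-product, $f'$ is strictly decreasing, with $f'(p)\to+\infty$ as $p\to0^+$ and $f'(p)\to-\infty$ as $p\to1^-$. So $f'$ has exactly one zero, which is the unique maximizer $p^*$ mentioned in the abstract. I would note this in passing as a bridge to the next result, not as part of this proof.
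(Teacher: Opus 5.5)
Your proposal is correct and follows the paper's proof: you compute $f''(p)=-\frac{1}{1-p}-\frac{1}{p^{2}}$, observe that it is strictly negative on $(0,1)$, and conclude strict concavity. The mean-value-theorem justification of the second-derivative criterion and the sign check via $u=1-p$ add detail the paper leaves implicit, but the argument is the same.
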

\begin{proof}
A direct computation gives
\[
f''(p) = -\frac{1}{1-p} - \frac{1}{p^2},
\]
which is strictly negative for all $p\in(0,1)$. Hence $f$ is strictly concave on $(0,1)$.
\end{proof}

We may interpret $f$ using cognitive processes in a three-layer analogy:

\begin{table}[h!]
\centering
\small
\caption{Three-layer analogy connecting probability, information, and cognitive meaning.}
\begin{tabular}{@{}p{3cm}p{3cm}p{5cm}p{5.5cm}@{}}
\toprule
\textbf{Layer} & \textbf{Mathematical object} & \textbf{Standard meaning} & \textbf{Metaphorical / cognitive reading} \\
\midrule
\textit{1. A priori beliefs} & $p \in (0,1)$ & Probability assigned to a specific event $A$ & “What we believe we know” (our confidence in $A$) \\
 & $1 - p$ & Probability of the complement $A^c$ & “What we acknowledge we do not know” (confidence that something different from $A$ may occur) \\
\addlinespace
\textit{2. Post-observation surprise} & $S(A) = -\ln p$ & Self-information when $A$ occurs & How surprising it is to confirm what we expected \\
 & $S(A^c) = -\ln(1 - p)$ & Self-information when $A^c$ occurs & How surprising it is to witness what we had ruled out \\
\addlinespace
\textit{3. Average uncertainty} & $H = -[p\ln p + (1 - p)\ln(1 - p)]$ & Shannon entropy for a Bernoulli variable & “How much surprise we still expect on average”: a blend of surprises from both what we think we know and what we admit we don’t \\
\bottomrule
\end{tabular}
\end{table}

Then the function from Eq.~\eqref{eq:balance} can be decomposed into two terms, each carrying a specific probabilistic meaning and a cognitive/metaphorical interpretation:

\begin{table}[h!]
\centering
\small
\caption{Components of the balance function and their interpretations.}
\begin{tabular}{@{}p{3.2cm}p{3.5cm}p{4.2cm}p{5.5cm}@{}}
\toprule
\textbf{Term} & \textbf{Formula} & \textbf{Probabilistic meaning} & \textbf{Metaphorical / cognitive reading} \\
\midrule
\textbf{A. Expected surprise from the complement} & $-(1 - p)\ln(1 - p)$ & Probability of the complement $A^c$ weighted by its surprise: $-\ln(1 - p)$. This is the average information we would gain if the unexpected occurs. & “How much surprise we can expect from the part of reality we admit not to control.” \\
\addlinespace
\textbf{B. (Negative) surprise from the expected event} & $+\ln p$ & The surprise of observing the expected event $A$ is $-\ln p$. This term appears with a positive sign, effectively subtracting the surprise of confirming our expectations. & “We discount the surprise of the known; the greater our confidence in $A$, the smaller the penalty for its confirmation.” \\
\bottomrule
\end{tabular}
\end{table}

or in other words,

\[
f(p) = 
\underbrace{\text{Expected surprise from the unknown}}_{\text{(1)}} 
\;-\; 
\underbrace{\text{Pointwise surprise from the known}}_{\text{(2)}}
\]

This function measures the \textbf{net informational payoff}---how much more (or less) surprise is expected from what we do not know compared to what we believe we know.

\begin{itemize}
    \item \textbf{If $f(p) > 0$}: The \emph{surprise potential} of the unknown ($A^c$) exceeds the cost of confirming the expected ($A$). Our confidence could become a liability.
    \item \textbf{If $f(p) < 0$}: The \emph{self-inflicted surprise} of confirming our own expectations dominates; the unknown adds little informational gain.
\end{itemize}

\subsection*{Behavior at the extremes}

\begin{table}[h!]
\centering
\small
\caption{Extreme behavior of $f(p)$ and its interpretation.}
\begin{tabular}{@{}p{3cm}p{3cm}p{8cm}@{}}
\toprule
\textbf{Limit} & \textbf{Value of $f(p)$} & \textbf{Interpretation} \\
\midrule
$p \to 0$ & $f(p) \to -\infty$ & We are almost certain that $A$ will \emph{not} occur. If it does, $\ln p \to -\infty$, producing an enormous shock that dominates the balance. \\
\addlinespace
$p \to 1$ & $f(p) \to 0^+$ & The complement becomes almost impossible; its expected surprise vanishes. Confirming $A$ also adds nearly no new information. \\
\addlinespace
$p \approx 0.7$ & $f(p) \approx 0$ & The expected surprise from the unknown equals the cost of confirming the known. This is the equilibrium point where neither dominates. \\
\bottomrule
\end{tabular}
\end{table}

In this way, term A quantifies the degree of potential surprise that may still arise from the domain we explicitly recognize as uncertain. It reflects the information content of those outcomes we assign low probability to—the events we believe are unlikely but nonetheless possible. This term represents the informational cost of being proven wrong about what we assumed was improbable, and as such, it captures the vulnerability embedded in our admitted ignorance.

In contrast, Term B represents a discount applied to the surprise that would arise from confirming what we already expect. It may seem paradoxical, but even when an event we are confident about does occur, it still carries an informational signature: the confirmation is not completely devoid of surprise, though its magnitude diminishes as our confidence grows. This term subtracts that residual, expected surprise, reminding us that confidence is never free of informational cost.

Together, these two terms combine to define the function $f(p)$, which expresses the net informational balance between unknowns and knowns. The sign and magnitude of $f(p)$ indicate which side of our epistemic model contributes more to our potential shock. A positive value signals that the neglected or underestimated portion of the probability space could surprise us more than the confirmation of our beliefs, while a negative value suggests that our own overconfidence may be our primary source of informational risk. This balance is central to understanding how systems allocate attention, manage expectations, and navigate environments filled with both structure and uncertainty.

In Fig. 1 we present a conceptual summary figure to guide the reading of the rest of the manuscript. 

\begin{figure}
    \centering
    \includegraphics[width=0.9\linewidth]{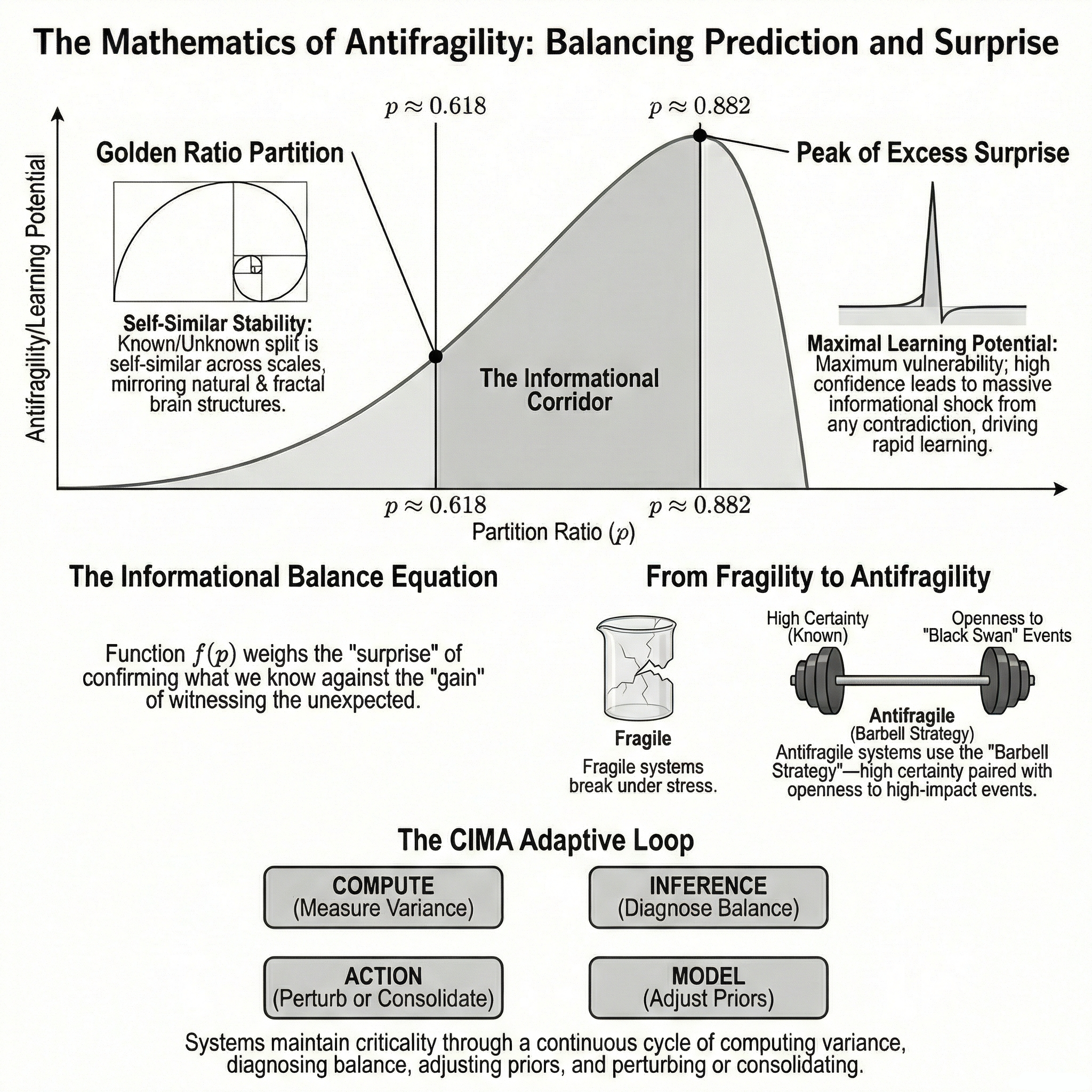}
    \caption{The curve represents the informational balance function 
$ f(p) = -(1-p)\ln(1-p) + \ln p $, where $ p \in (0,1) $ denotes the proportion of variance explained by an internal model (prediction) and $1-p$ the residual unexplained component (novelty). 
The function quantifies the net informational imbalance between the expected surprise arising from the unknown component and the verification cost associated with confirming what is already predicted. 
The unique maximum at $ p^* \approx 0.882 $ identifies the regime in which residual uncertainty exerts the greatest asymmetric informational influence, corresponding to a state of high confidence coupled with maximal vulnerability to being wrong. 
Independently, the self-similar partition $ p_\phi = 1/\varphi \approx 0.618 $, derived from the relation $ p:(1-p)=1:p $, marks a structurally distinguished informational split in which the ratio of known to unknown equals the ratio of known to the whole. 
The interval between $ p_\phi $ and $ p^* $ defines an informational corridor in which predictive structure and novelty coexist, providing the theoretical basis for adaptive operation near criticality, where systems remain coherent yet sensitive to perturbations.
}
\end{figure}

\section{Maximisation and golden self-similarity}\label{sec:max}

Consider the informational balance function. To locate its internal maximum, we compute its first derivative:
\[
f'(p) = 1 + \ln(1 - p) + \frac{1}{p}.
\]

Setting the derivative equal to zero yields the critical point:
\[
f'(p) = 0 \quad \Longleftrightarrow \quad 1 + \ln(1 - p) + \frac{1}{p} = 0.
\]

\begin{proposition}[Existence and uniqueness of the maximizer]
There exists a unique $p^{\star}\in(0,1)$ such that $f'(p^{\star})=0$; moreover, this $p^{\star}$ is the unique global maximizer of $f$ on $(0,1)$.
\end{proposition}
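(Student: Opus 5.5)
We will use the strict concavity from the previous proposition, which gives $f''(p) = -\frac{1}{1-p} - \frac{1}{p^2} < 0$ on $(0,1)$. Hence $f'(p) = 1 + \ln(1-p) + \frac{1}{p}$ is continuous and strictly decreasing on $(0,1)$. Strict monotonicity already gives \emph{uniqueness}: $f'$ can vanish at most once. Existence then reduces to showing that $f'$ takes both signs on $(0,1)$, after which the intermediate value theorem supplies a zero $p^\star$.

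For the sign change we examine the endpoint limits. As $p \to 0^+$, the term $1/p \to +\infty$ while $\ln(1-p) \to 0$, so $f'(p) \to +\infty$. As $p \to 1^-$, the term $\ln(1-p) \to -\infty$ while $1 + 1/p \to 2$, so $f'(p) \to -\infty$. Hence there exist $a < b$ in $(0,1)$ with $f'(a) > 0 > f'(b)$. If concrete witnesses are preferred, $p = 1/2$ gives $f'(1/2) = 3 - \ln 2 > 0$. For the other side, $p = 0.95$ gives $f'(0.95) = 1 + \ln(0.05) + 1/0.95 \approx 1 - 3.00 + 1.05 < 0$. Either way, continuity gives $p^\star \in (1/2,\, 0.95)$ with $f'(p^\star) = 0$. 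This is consistent with the value $p^\star \approx 0.882$ quoted earlier.

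For the global maximization claim, we argue again from monotonicity of $f'$. Since $f'$ is strictly decreasing and vanishes at $p^\star$, we have $f' > 0$ on $(0, p^\star)$ and $f' < 0$ on $(p^\star, 1)$. By the mean value theorem, $f$ is strictly increasing on $(0, p^\star]$ and strictly decreasing on $[p^\star, 1)$. Thus $f(p) < f(p^\star)$ for every $p \neq p^\star$, so $p^\star$ is the unique global maximizer. Equivalently, a differentiable strictly concave function lies strictly below its tangent line at $p^\star$, and that tangent is horizontal.

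No step is a genuine obstacle. The only point needing care is the limit of $f'$ at $p \to 1^-$, where $1/p$ stays bounded while $\ln(1-p)$ diverges, so the sign is unambiguous. We also note that there is no competing supremum at the boundary. The statement concerns maximizers attained in the open interval, and strict monotonicity of $f$ on each side of $p^\star$ already excludes any other point achieving the value $f(p^\star)$.
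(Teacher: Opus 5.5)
Your proof is correct and follows essentially the same route as the paper. In both, $f''<0$ makes $f'$ strictly decreasing, the endpoint limits $f'(0^+)=+\infty$ and $f'(1^-)=-\infty$ together with the intermediate value theorem give existence, strict monotonicity gives uniqueness, and concavity yields the unique global maximizer; your explicit witnesses $f'(1/2)=3-\ln 2>0$ and $f'(0.95)<0$, and your sign-of-$f'$ argument for global maximality, simply spell out steps the paper leaves implicit.
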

\begin{proof}
Since
\[
f''(p) = -\frac{1}{1-p} - \frac{1}{p^2} < 0 \quad \text{for } 0<p<1,
\]
the derivative $f'$ is strictly decreasing on $(0,1)$. In addition,
\[
\lim_{p\to 0^+} f'(p) = +\infty,\qquad \lim_{p\to 1^-} f'(p) = -\infty.
\]
By the intermediate value theorem there is at least one $p^{\star}\in(0,1)$ with $f'(p^{\star})=0$, and strict monotonicity of $f'$ implies it is unique. Strict concavity then implies this critical point is the unique global maximizer.
\end{proof}

Solving the transcendental equation $f'(p)=0$ numerically gives a unique root in the interval $0<p<1$:
\[
p^{\star} \approx 0.882 \quad \text{(to three decimals)}.
\]

Evaluating the function at this point yields:
\[
f(p^{\star}) \approx 0.127.
\]

In the following table we present our interpretation of the maximum: 

\begin{table}[h!]
\centering
\small
\caption{Interpretation of the critical point of $f(p)$.}
\begin{tabular}{@{}p{3.5cm}p{4.5cm}p{6.5cm}@{}}
\toprule
\textbf{Feature} & \textbf{Formal reading} & \textbf{Cognitive/metaphorical reading} \\
\midrule
$p^{\star} \approx 0.882$ & High (but not total) confidence in event $A$ & We believe we know almost everything, leaving just 12\% of uncertainty—only a small “grey zone.” \\
\addlinespace
$f(p^{\star}) > 0$ & Average surprise from the residual 12\% outweighs the surprise of confirming what we expected & We are maximally vulnerable to being blindsided: confidence is so high that contradiction from reality would cause a disproportionately large shock. \\
\addlinespace
$f'(p^{\star}) = 0$ & Function is flat: any small change in $p$ reduces net surprise & This is a “peak imbalance” point between known and unknown. Any move toward more or less certainty diminishes the tension. \\
\bottomrule
\end{tabular}
\end{table}

Imagine a risk engineer who estimates a 88.2\% chance that a system will function correctly, and a 11.8\% chance it will fail.

If the system works, which is likely, the informational gain is modest:
\[
-\ln(0.882) \approx 0.126.
\]

If it fails, which is unlikely, the informational cost is large:
\[
-\ln(0.118) \approx 2.14,
\]
and its contribution, weighted by probability, is:
\[
0.118 \cdot 2.14 \approx 0.25.
\]

The net difference, given by the balance function, is:
\[
f(p^{\star}) \approx 0.25 - 0.126 = 0.127.
\]

This represents the point of maximum excess surprise: a situation where confidence borders on complacency, and the impact of being wrong peaks despite the high probability of being right.

This scenario can be directly connected to Taleb's barbell strategy. In \emph{Antifragile}, Taleb proposes that the best way to navigate uncertainty is not by relying on moderate, average-risk options, but by combining two extremes: placing most of one’s resources in extremely safe, low-risk domains, while simultaneously allocating a small portion to highly volatile, high-uncertainty exposures with large potential upside. The informational model presented here reflects the same logic. Consider a risk engineer who assigns a probability of $p = 0.882$ to the system functioning correctly, and $1 - p = 0.118$ to failure. If the system performs as expected, the information gained is modest, since $-\ln(0.882) \approx 0.126$. However, if the unlikely event occurs and the system fails, the informational cost is substantial: $-\ln(0.118) \approx 2.14$. Even after being weighted by its small probability, the contribution of this event is $0.118 \cdot 2.14 \approx 0.25$, which dominates the expected gain from the correct prediction. The net balance, given by the informational function $f(p)$, is therefore $f(p^{\star}) \approx 0.127$, marking the point of maximum excess surprise. This reflects a cognitive structure identical to the barbell principle: high certainty about the main outcome (safe side), paired with an open window to tail events (risky side) whose informational payoff is convex. In this regime, confidence is high but not absolute, and any contradiction from reality results in an informational shock disproportionate to the gain of being correct. This is not fragility, but antifragility—where the cost of being right is low, and the gain from being wrong is high. Just as the barbell strategy converts volatility into advantage, this probabilistic configuration converts epistemic surprise into learning.

\subsection{Golden self-similarity as an informational partition}

Beyond the location of the maximum of \( f \), we may ask whether there exists a structurally special value of \( p \in (0,1) \) at which the relation between what is known, what is unknown, and the whole exhibits a self-similar proportion. A natural requirement is that
\[
\frac{p}{1-p} = \frac{1}{p},
\]
that is:
\[
p : (1-p) = 1 : p.
\]
This expresses the idea that the ratio of known to unknown equals the ratio of known to total: the structure of the epistemic split repeats itself when zooming between scales.

Solving
\[
\frac{p}{1-p} = \frac{1}{p}
\quad \Longleftrightarrow \quad
p^2 + p - 1 = 0
\]
yields
\[
p_\varphi = \frac{\sqrt{5} - 1}{2} = \frac{1}{\varphi} \approx 0.618,
\]
where
\[
\varphi = \frac{1 + \sqrt{5}}{2} \approx 1.618,
\qquad
\varphi^2 = \varphi + 1.
\]

We can now interpret each step of this derivation through the lens of our “known vs. unknown” epistemic framework:

\begin{table}[h!]
\centering
\small
\caption{Narrative interpretation of the golden-ratio self-similar partition.}
\begin{tabular}{@{}p{1.5cm}p{5cm}p{3.2cm}p{6cm}@{}}
\toprule
\textbf{Step} & \textbf{Operation} & \textbf{Result} & \textbf{Narrative interpretation} \\
\midrule
1 & Impose self-similar partition & $p : (1-p) = 1 : p$ & The proportion between known and unknown mirrors the proportion between known and total information. \\
\addlinespace
2 & Rewrite as an equation & $\dfrac{p}{1-p} = \dfrac{1}{p}$ & We require that the epistemic split remains invariant under “zooming out” from the unknown domain to the total. \\
\addlinespace
3 & Solve algebraically & $p^2 + p - 1 = 0$, $p = 1/\varphi \approx 0.618$ & We obtain the golden-ratio partition: a fixed point of recursive balance between knowledge and novelty. \\
\bottomrule
\end{tabular}
\end{table}

Of course, the golden ratio \( \varphi \approx 1.618 \) itself cannot be a probability (as it exceeds 1). However, its reciprocal lies within the admissible domain \( (0,1) \) and inherits the same self-similarity:
\[
\frac{1}{\varphi} = \varphi - 1 \approx 0.618.
\]

We therefore interpret
\[
p_\varphi \approx 0.618
\]
not as the maximizer of the balance function \( f \), but as a structurally distinguished partition: a recursively self-similar split between known and unknown information. The maximum \( p^{*} \approx 0.882 \) and the golden partition \( p_\varphi \approx 0.618 \) thus play complementary roles: the former identifies the point of maximum excess surprise, while the latter defines a canonical self-similar balance structure.

This auto-similar proportion is the same that underlies the Fibonacci sequence and many natural patterns of growth, structure, and harmony. Here it emerges as an informational fixed point in the space of epistemic partitions.

\section{Criticality correspondence}\label{sec:crit}

In complex systems, especially the brain, criticality manifests as a state of marginal stability between two opposing forces: self-organization and emergence. This regime is empirically characterized by scale-invariant dynamics, such as neuronal avalanches, $1/f$ electrical spectra, and fractal morpho-architecture \cite{beggs2003,bak1987,chialvo2010,beggs2008}. These phenomena obey power laws, such as $P(s) \propto s^{-\tau}$ for avalanche sizes and durations, and remain invariant under changes in temporal or spatial resolution \cite{clauset2009}. In other words, criticality expresses a deep structural \emph{self-similarity}.

This self-similarity is not merely mathematical; it echoes the same informational proportion captured by our partitions of \( p \). The maximum of \( f(p) \) at \( p^{*} \approx 0.882 \) shows where the residual uncertainty exerts maximal asymmetric influence on the informational balance. The golden-ratio partition \( p_\varphi \approx 0.618 \) selects a self-similar split where the ratio of known to unknown equals the ratio of known to the whole. In a $1/f$ spectrum, an analogous recursive balance holds: zooming in or out yields the same statistical structure. This reflects a nested tension between stability and novelty, prediction and surprise.

Empirical evidence supports the idea that the brain self-organizes around a critical zone. Fractal brain structures have been observed across species (human, mouse, and fly), suggesting a universal structural transition point. Neuronal avalanches modulate with sleep phases, and deviations from criticality, such as the spectral slope shifting away from $1/f$, have been linked to neurological disorders like Alzheimer’s disease. Moreover, experiments show that injecting endogenous-like $1/f$ noise into artificial and biological neural networks can synchronize their activity near this critical boundary.

From our narrative perspective, we can map the roles as follows: the \emph{self-organizing} side is driven by stable attractor dynamics (oscillations, synchrony) and corresponds to “the known” in our model, i.e., internal predictions and priors. It reduces dimensionality, saves energy, and promotes local coherence. The \emph{emergent} side corresponds to “the unknown”, external fluctuations, surprises, and deviations that produce avalanches and scale-free activity. This component increases representational diversity and responsiveness. The \emph{critical point}, where the system explores a corridor between \(p_\varphi\) and \(p^{*}\), represents an optimal balance: neither frozen in predictability nor lost in chaos.

Why would a brain want to operate near this boundary? First, it enables a \textbf{maximal dynamic range}, allowing neurons to respond proportionally to stimuli across multiple scales \cite{kinouchi2006,shew2009,shewplenz2013}. Second, it supports \textbf{predictive power}: the self-organized patterns implement efficient internal models, while the emergent fluctuations inject prediction errors needed for learning \cite{beggs2008,shewplenz2013}. Third, it promotes \textbf{energetic efficiency}: critical avalanches minimize redundant spikes yet ensure coverage of the state space \cite{beggs2008}. Finally, it allows \textbf{robust flexibility}: slight changes in synaptic gain can move the network between subcritical, critical, and supercritical modes, enabling adaptive shifts between exploration and exploitation \cite{beggs2008,hessegross2014}.

In short, a brain operating near criticality physically realizes the very dialectic we have formalized: known and unknown, order and surprise, recursively mixed in a scale-free proportion. This balance confers the optimal blend of stability and adaptability demanded by any organism that must both anticipate the world and remain sensitive to what it has never seen before.

Most important, criticality has been proposed as the first real universal principle for complex systems: the \textbf{Principle of Dynamic Criticality (PDC)} as the hypothesis that living systems, under evolutionary pressure, tend to self-organize near critical points of their dynamical phase space, balancing order and chaos. At these points, known as \emph{dynamically critical states}, systems exhibit scale-invariant behavior, maximal dynamic range, and enhanced computational and adaptive capabilities. This principle suggests that criticality is not merely a structural or statistical feature, but a functional attractor favored by natural selection due to its capacity to optimize sensitivity to perturbations, efficiency in information processing, and responsiveness to environmental change.

Under this view, dynamic criticality emerges as a unifying macroscopic operating principle of life, analogous in generality to the principle of evolution. It provides a theoretical anchor to interpret the recurrence of $1/f$ spectra, neuronal avalanches, and other power-law phenomena observed across biological scales. Importantly, it is also applicable beyond biology: any adaptive system—natural or artificial—may benefit from operating at the edge of criticality, where known structures and unknown fluctuations coexist in an optimally balanced, self-similar proportion.

We highlight that extensive evidence points to systems at criticality being at a maximum of complexity, computational and inferential capabilities. This will be of special importance in the next section.

\section{Embedding in the CIMA loop}\label{sec:cima}

A \textbf{CIMA process} is a recursive, four-stage adaptive cycle composed of \emph{Compute}, \emph{Inference}, \emph{Model}, and \emph{Action}, by which a system dynamically calibrates the balance between prediction and surprise. The goal of the cycle is to maintain the system near a critical regime—neither too ordered nor too chaotic—where informational payoff and adaptability are maximized \cite{gershenson2005}.

\begin{enumerate}
    \item \textbf{Compute}: The system estimates the proportion $p$ of variance explained by its internal model relative to total observed variance. This quantifies the current balance between the known and the unknown.
    
    \item \textbf{Inference}: Using the estimated $p$, the system evaluates a balance function—e.g., $f(p) = -(1 - p)\ln(1 - p) + \ln p$—alongside independent dynamical markers (e.g., $1/f$ scaling, avalanche exponents) to diagnose whether it is near a desirable operating regime.
    
    \item \textbf{Model}: Based on this diagnosis, the system reconfigures its internal priors or learning parameters to increase or decrease model confidence. This may involve adjusting precision weighting, model complexity, or gain modulation to steer the system toward an appropriate balance.
    
    \item \textbf{Action}: The system acts on the environment—or itself—either to consolidate predictions (if overexposed to surprise) or to introduce controlled perturbations (if overconfident), thereby maintaining dynamic criticality.
\end{enumerate}

\noindent
A CIMA process thereby implements a continuous loop of \textbf{adaptive inference under uncertainty}, where antifragility emerges from the system’s ability to transform perturbations into information gains. It provides a general computational scaffold for critical cognition in biological systems, and a blueprint for designing artificial systems that learn and thrive at the edge of chaos.

As we have shown above, the informational balance function
\[
f(p) = -(1 - p)\ln(1 - p) + \ln p
\]
has a unique maximum at \( p^{*} \approx 0.882 \), identifying the configuration in which residual uncertainty has the greatest asymmetric impact. Independently, the condition of self-similar partition between known, unknown, and whole,
\[
\frac{p}{1-p} = \frac{1}{p},
\]
selects the golden-ratio reciprocal point \( p_\varphi = 1/\varphi \approx 0.618 \). This value represents a self-similar partition of information: the ratio of known to unknown is equal to the ratio of known to the whole, i.e., \( p : (1 - p) = 1 : p \). This recursive structure mirrors the internal consistency of the Fibonacci sequence and $1/f$ dynamics observed in critical systems. When embedded into the CIMA loop, this partition can be used as a \emph{design rule} for self-regulation across the four stages of adaptive cognition, while \( p^{*} \) marks the point of maximal vulnerability if the system drifts too far into overconfidence.

\textbf{Compute.} The system begins by quantifying how much of reality is currently explained by its internal model. This is formalized as \( p = \mathrm{Var}_{\text{explained}}/\mathrm{Var}_{\text{total}} \), while the residual \( 1 - p \) captures what remains surprising. A system targeting \( p \approx p_\varphi \) maintains a robust mix of model certainty and openness to novelty.

\textbf{Inference.} The system evaluates the differential surprise between known and unknown using the components of the balance function: \( S_{\text{unknown}} = -(1 - p)\ln(1 - p) \) and \( S_{\text{known}} = -\ln p \). When \( f(p) \approx 0 \), the surprise expected from the unexplained balances the confirmation cost of what is known. This condition holds near a broad intermediate range (including \( p \approx 0.7 \)), and deviations from it can be used as a continuous signal to detect whether the system is drifting toward overconfidence (high $p$) or confusion (low $p$).

\textbf{Model.} Based on this inference, the model is updated to preserve an adequate proportion between known and unknown. One particularly elegant choice is the golden partition \( p_\varphi \), where about 62\% of representational capacity focuses on refining the already captured structure (exploitation), while 38\% remains dedicated to characterizing residual unpredictability (exploration). This adjustment ensures that the system “grows into itself” without losing adaptability—a hallmark of self-similarity.

\textbf{Action.} Finally, the system acts on itself or the environment to maintain dynamic balance. If \( p \ll p_\varphi \), the system collects more data or amplifies perturbations (e.g., stress-testing, exploratory moves). If \( p \gg p_\varphi \), it introduces noise, performs A/B testing, or increases uncertainty to prevent overfitting. These interventions tune the exploration–exploitation trade-off, keeping the system near the critical regime.

\textbf{Systemic reading.} The golden-ratio partition at \( p_\varphi = 1/\varphi \) provides a self-similar architecture across scales: zooming from total information → known → unknown preserves the same ratio. This echoes the scale-invariant structure of $1/f$ spectra and critical networks. Operating near such a partition grants the system maximal responsiveness to new signals without sacrificing internal coherence—key traits of critical dynamics in neural, ecological, and social systems.

\textbf{Operational antifragility.} Remaining within an “informational corridor” between too little and too much certainty prevents both rigid overconfidence (too much order) and unstable chaos (too much noise), positioning the system to improve from perturbations. In practical terms: estimate \( p \) as explained variance during \textit{Compute}; monitor \( f(p) \) as a feedback signal in \textit{Inference}; rebalance model structure in \textit{Model} to steer \( p \) away from fragile extremes; and apply exploratory or exploitative interventions in \textit{Action} accordingly.

Thus, the golden-ratio partition becomes an internal rule of governance for the CIMA process, ensuring that the organization of knowledge and receptivity to surprise maintain the self-similarity and adaptive capacity characteristic of systems operating near criticality, while the exact maximum of \( f \) warns about the zone of maximal informational exposure.

\subsection{A minimal predictive--coding instantiation of the CIMA loop}
\label{subsec:minimal_instantiation}

To make the CIMA framework biologically explicit while preserving minimal assumptions, we now instantiate its components in a simple predictive--coding setting, representative of cortical microcircuits or small neuronal populations.

\paragraph{Generative environment.}
We consider a scalar sensory signal
\[
y_t = \theta_t + \eta_t,
\]
where $\theta_t$ denotes a latent environmental state and $\eta_t \sim \mathcal N(0,\sigma_t^2)$ represents sensory noise or environmental volatility. Variations in $\sigma_t^2$ define a natural and biologically interpretable class of perturbations, corresponding to changes in stimulus reliability, arousal, or contextual uncertainty.

\paragraph{Internal model and prediction error.}
The system maintains an internal estimate $\mu_t$ of the latent state and generates predictions $\hat y_t = \mu_t$. Prediction errors are given by
\[
\varepsilon_t = y_t - \hat y_t.
\]
Model updating follows a minimal predictive--coding rule,
\[
\mu_{t+1} = \mu_t + \alpha_t \, \omega_t \, \varepsilon_t,
\]
where $\alpha_t$ is a learning-rate-like parameter and $\omega_t$ denotes the precision weighting of prediction errors, commonly associated with synaptic gain modulation or neuromodulatory control in cortical circuits.

\paragraph{Operational definition of the informational balance.}
Within a sliding temporal window, we define the proportion of explained variance as
\[
p_t \;=\; 1 - \frac{\mathrm{Var}(\varepsilon_t)}{\mathrm{Var}(y_t)} \in (0,1),
\]
so that $p_t$ quantifies the fraction of sensory variability captured by the internal model, while $1-p_t$ represents residual novelty or unexplained information. This provides a concrete realization of the \emph{Compute} stage of the CIMA loop.

The informational balance function
\[
f(p_t) = -(1-p_t)\ln(1-p_t) + \ln p_t
\]
can then be evaluated continuously as a diagnostic signal in the \emph{Inference} stage, indicating whether residual uncertainty or overconfidence dominates the system’s informational exposure.

\paragraph{Perturbations and payoff.}
We define the perturbation class as changes in sensory volatility,
\[
\Pi := \sigma_t^2,
\]
including both gradual modulations and intermittent increases corresponding to unexpected environmental fluctuations. The system’s payoff is defined as learning progress,
\[
\Phi(\Pi) := \Delta E(\Pi) = E_{\mathrm{before}} - E_{\mathrm{after}},
\]
where $E$ denotes mean squared prediction error computed over fixed time windows before and after a perturbation. A system is antifragile with respect to volatility if $\Phi(\Pi)$ is convex, i.e.,
\[
\frac{d^2 \Phi}{d \Pi^2} > 0,
\]
meaning that larger perturbations induce disproportionately larger improvements in predictive performance.

\paragraph{Model regulation and action.}
In the \emph{Model} stage, parameters such as $\omega_t$ or $\alpha_t$ are adjusted to regulate $p_t$ away from fragile extremes. Targeting a corridor centered on the self-similar partition $p_\varphi = 1/\varphi \approx 0.618$ maintains a balance between exploitation (model consolidation) and exploration (openness to surprise). Finally, in the \emph{Action} stage, the system may modulate sampling, gain, or exposure to uncertainty—either amplifying perturbations when $p_t \ll p_\varphi$ or injecting controlled noise when $p_t \gg p_\varphi$—to remain near a dynamically critical regime.

This minimal instantiation demonstrates how the informational balance function, the golden-ratio partition, and the CIMA loop can be mapped onto measurable quantities in a biologically plausible predictive--coding system, thereby grounding the proposed route to antifragility in explicit neural and statistical terms.

\section{From balance to Antifragility}\label{sec:anti}

Recently it has been proposed that all complex adaptive systems (CAS) under their best configuration (health) tend to respond to their informational environment in the most antifragile way achievable. 

We define \textbf{antifragility} as the property by which a system not only withstands perturbations or stressors, but actually improves through them. Unlike robustness, which denotes resistance to change, or resilience, which refers to the ability to recover, antifragility implies a \emph{net gain} in function, adaptability, or structure as a result of exposure to uncertainty or volatility. It characterizes systems that benefit from disorder.

Formally, antifragility can be described by a triplet \( \langle S, \Pi, \Phi \rangle \), where \( S \) is the system, \( \Pi \) is a class of perturbations or stressors (such as shocks, noise, randomness, or fluctuations), and \( \Phi(\Pi) \) is a response or payoff function measuring the system’s outcome under those perturbations. A system is considered antifragile with respect to \( \Pi \) if the second derivative of its payoff function is positive: \( \frac{d^2 \Phi}{d \Pi^2} > 0 \). This convexity condition means that the system derives disproportionately greater benefit from larger perturbations than from smaller ones; in other words, the return curve bends upward, allowing the system to turn uncertainty into advantage.

This property is not absolute but contextual: antifragility depends on the nature of the perturbations, the time scale of observation, and the specific function used to evaluate performance or benefit. For instance, a system may be antifragile to variability in one domain but fragile in another, or antifragile in the short term but vulnerable over longer periods. It is therefore a relational and scale-dependent attribute, rather than a fixed trait.

Conceptually, antifragility operates across different dimensions. Qualitatively, it occupies a fourth category alongside fragility, robustness, and resilience. While fragile systems are damaged by stress, robust systems endure it, and resilient systems return to equilibrium, antifragile systems gain from it. This property can also be expressed across scales. Intrinsic antifragility arises from the internal structure of the system—such as redundancy, degeneracy, or modularity. Inherited antifragility is conferred by evolutionary or ecological context, where systems have been historically shaped by exposure to volatility. Induced antifragility results from intentional exposure to controlled stress, such as through training, hormesis, or exploratory experimentation.

Functionally, antifragility encompasses several behavioral classes. These include upregulation mechanisms where stress induces adaptive overcompensation; stochastic benefits where the system draws value from randomness or variance; scale-based advantages where larger disturbances produce disproportionate improvement; and temporal effects where the system may become stronger over time rather than degrade. Conversely, it excludes regimes of short-volatility fragility, where small, frequent perturbations accumulate harm, or temporal fragility, where irreversible losses accumulate due to aging or entropy.

Antifragile systems often exhibit the hallmarks of complex adaptive systems. These include nonlinear interactions, emergent behavior, self-organization, openness to information flow, and critical dynamics that balance order and disorder. Antifragility typically manifests near the edge of criticality, where systems remain sensitive to new signals without collapsing into noise.

Ultimately, antifragility is best understood as a general principle of adaptive organization in open systems. It formalizes how systems not only survive change, but \emph{use} it to become more responsive, more intelligent, and more capable of thriving in the face of unknowns. In this way, antifragility is the natural evolutionary response to complexity and uncertainty.

So, in simple words, antifragility refers to the property of a system that improves when exposed to perturbations, fluctuations, or uncertainty. This improvement is not linear, compensatory, or merely reactive. It reflects a structural capacity to transform disorder into advantage. 

This definition connects naturally with a broader narrative about the balance between what is known and what remains unknown. Let \( p \in (0,1) \) denote the fraction of reality that the system currently explains through its internal model. The complementary fraction, \( 1 - p \), represents the residual domain that is still unaccounted for, which is often the source of informational novelty and surprise. This contrast between known and unknown can be formalized through the function
\[
f(p) = -(1 - p)\ln(1 - p) + \ln p,
\]
which captures the net informational gain: the average surprise contributed by the unexplained component minus the verification cost of what is already known. As we have seen, this function has a unique maximum at \( p^{*} \approx 0.882 \), where the residual uncertainty contributes maximally to potential surprise. Independently, the self-similarity condition \( p:(1-p)=1:p \) singles out the golden-ratio partition \( p_\varphi = 1/\varphi \approx 0.618 \), where the ratios between known, unknown, and whole are recursively nested. At this point, the ratio between known and unknown equals the ratio of the known to the whole, forming a self-similar structure. This proportionality is not arbitrary. It reflects a recursive principle that aligns with scale-invariant dynamics commonly found in critical systems.

The curvature of \( f(p) \), which is concave on $(0,1)$, indicates that around its maximum small deviations in \( p \) reduce the net informational asymmetry: moving away from \( p^{*} \) decreases the excess surprise contributed by the unknown. Antifragility, however, does not rely on the convexity of \( f \) in \( p \), but on the convexity of the payoff function \( \Phi(\Pi) \) with respect to perturbations \(\Pi\). The role of \( f(p) \) is to quantify how the epistemic split between known and unknown modulates the potential for learning; the role of \( \Phi \) is to quantify how this potential is leveraged when the system is actually perturbed.

Within a CIMA loop, these two aspects are coupled. In the Compute phase, the system estimates the proportion \( p_t \) as the ratio of explained to total variance. This quantification reveals whether the model currently overfits or underfits reality. In the Inference phase, the system evaluates the value of \( f(p_t) \) and monitors dynamical signatures such as spectral slopes or avalanche distributions. These diagnostics indicate whether the system is maintaining the delicate balance required to stay near a critical state. The Model phase then adjusts internal structures to preserve a desirable proportion between known and unknown—possibly guided by the golden partition \( p_\varphi \) as a self-similar target. This may involve fine-tuning the complexity of representations, modifying the strength of priors, or adjusting the allocation of resources between exploration and exploitation. In the Action phase, the system interacts with its environment in order to realign itself with this informational balance. When \( p_t \) drifts too high, indicating overconfidence and the risk of fragility, the system may introduce controlled uncertainty or stressors. When \( p_t \) becomes too low, suggesting disintegration or loss of coherence, the system may reinforce learned patterns and stabilize its internal models.

This entire process is guided by the recursive proportion expressed by the identity \( 1/\varphi = \varphi - 1 \). The golden ratio thus becomes a principle of self-similarity across scales. It ensures that at every level of organization there remains enough informational openness to allow for adaptation, but also enough structure to preserve coherence. The known corresponds to the modular, redundant, and internally stable components of the system. The unknown arises from environmental, social, or temporal contexts that introduce variability. Together, they interact through a calibration process that continuously repositions the system within an antifragile corridor.

The CIMA loop, when properly calibrated, does not simply seek stability or resilience. It dynamically pursues an equilibrium between explanation and surprise. The golden-ratio partition \( p_\varphi \) is not just a mathematical artifact, but a generative principle that encodes a recursive relation between internal structure and external novelty. It provides a rule of internal governance that allows the system to remain adaptive, sensitive, and open to improvement. In this way, the balance function, the CIMA loop, and the principle of antifragility converge on a single operational insight: the systems that thrive are those that sustain a dynamic proportion between what they already know and what they are still capable of discovering.

\section{A Statistical Physics Interpretation: Balancing Mean Field and Fluctuations}

From the perspective of statistical physics, the balance function
\[
f(p) = -(1 - p)\ln(1 - p) + \ln p
\]
admits a natural reinterpretation as the point at which the contributions of the mean field and the fluctuations to the system’s information—or, analogously, to its statistical free energy—are in equilibrium. This idea echoes foundational intuitions in field theory and phase transitions, where a system's macroscopic behavior is shaped by the interplay between an average order parameter and the spectrum of its local deviations \cite{goldenfeld1992}.

In this analogy, the probability \( p \) represents the fraction of reality that the system currently explains, corresponding to the macroscopic order or mean field \( \langle \phi \rangle \). Conversely, the unexplained fraction \( 1 - p \) maps onto the fluctuations \( \delta \phi = \phi - \langle \phi \rangle \), which measure how the system deviates from its mean state. The term \( -p\ln p \) captures the cost of confirming the known, reflecting the deterministic part of the system's energy landscape, whereas \( -(1 - p)\ln(1 - p) \) expresses the average surprise contributed by the residual or unmodeled component, which parallels the entropy associated with fluctuations.

Rather than relying on an uncontrolled linearization of the logarithm, we can identify the golden-ratio partition directly from a self-similarity condition in the balance between mean field and fluctuations. Imposing that the relative weight of the mean field with respect to fluctuations equals the relative weight of the mean field with respect to the total,
\[
\frac{p}{1-p} = \frac{1}{p},
\]
leads to the quadratic equation
\[
p^2 + p - 1 = 0,
\]
with solution \( p = 1/\varphi \approx 0.618 \). At this point the informational contribution of the explained component and that of the unexplained component are related in a scale-free way: zooming from fluctuations to the whole preserves the same ratio.

This condition echoes the Ginzburg criterion, which delineates the boundary where the contributions from fluctuations become comparable to those from the mean field \cite{goldenfeld1992}. Beyond this point, simple mean-field theories break down and the system enters a critical regime where new, scale-invariant structures emerge.

Concrete physical models reflect this interpretation. In a $d$-dimensional Ising system, the total free energy decomposes as \( F = F_{\text{MF}} + F_{\text{fluct}} \). Above the critical dimension, the mean-field term dominates and the system behaves in an overly ordered fashion, which in informational terms corresponds to \( p \to 1 \). Far below the critical dimension, fluctuations overwhelm the mean behavior, and the system loses coherence, resembling the case where \( p \to 0.5 \). At the boundary where both terms balance, power laws and self-similarity emerge. This crossover can be associated with informational partitions such as \( p_\varphi \), where sensitivity and coherence coexist.

Similarly, in the Ginzburg–Landau formalism, the free energy includes a deterministic component \( F = a\phi^2 + b\phi^4 + \ldots \) and fluctuation corrections such as \( k_B T \ln \det[-\nabla^2 + m^2] \). The optimal regime occurs near \( a(T) \approx 0 \), where the effective mass \( m \to 0 \), allowing all modes to couple across scales. This gives rise to temporal and spatial self-similarity, exactly as seen in critical phenomena and in the $1/f$ spectra of neural dynamics.

In neurobiological systems, this balance manifests as a trade-off between synaptic gain, interpreted as a macroscopic control parameter, and synaptic noise, which captures microscopic variability. Properly tuning the gain ensures that neural oscillations do not freeze and that noise does not dissolve coherence. The resulting regime, characterized by scale-free avalanche activity and flexible responsiveness, is the biological counterpart of our informational balance.

This physical picture integrates seamlessly with the CIMA framework. In the Compute phase, the system measures the explained variance, which serves as a proxy for the mean field, and the residual variance, which maps onto fluctuations. In the Inference phase, it evaluates whether these contributions balance in the informational sense defined by \( f(p) \). When they do, the system operates near criticality. The Model phase adjusts the structural couplings—such as redundancy, buffering, or exposure mechanisms—so that the convexity of the payoff with respect to perturbations is preserved across time. In the Action phase, the system modulates its exposure to stress by introducing or removing perturbations as needed to maintain itself near the critical edge, where antifragility is maximized. At this point, noise is not simply tolerated but transformed into a functional resource due to the underlying convexity of the system’s response.

The exact optimum of \( f(p) \) at \( p^{*} \approx 0.882 \) represents the point where the unexplained component contributes maximally to the asymmetry between unknown and known surprise. Informational partitions like \( p_\varphi \) identify self-similar balances where mean-field and fluctuation contributions are proportionally nested. Together, these structures mark the onset of criticality in physical systems and, within the CIMA narrative, define the state in which the system becomes most capable of converting perturbations into functional gains. It is here that antifragility finds its physical signature.

\section{Discussion}

We have proposed a unifying informational framework in which the balance between prediction and surprise—formally captured by the function \( f(p) = -(1 - p)\ln(1 - p) + \ln p \)—acts as a central organizing principle for adaptive systems. At the heart of this model lie two distinct but complementary constructs: the exact maximum \( p^{*} \approx 0.882 \), which defines a point of maximal net excess surprise, and the golden-ratio partition \( p_\varphi = 1/\varphi \approx 0.618 \), which defines a self-similar partition between what the system explains and what remains surprising. The former marks where residual uncertainty has its strongest asymmetric impact; the latter encodes a recursive proportionality between knowledge and novelty.

\begin{figure}
    \centering
    \includegraphics[width=0.90\linewidth]{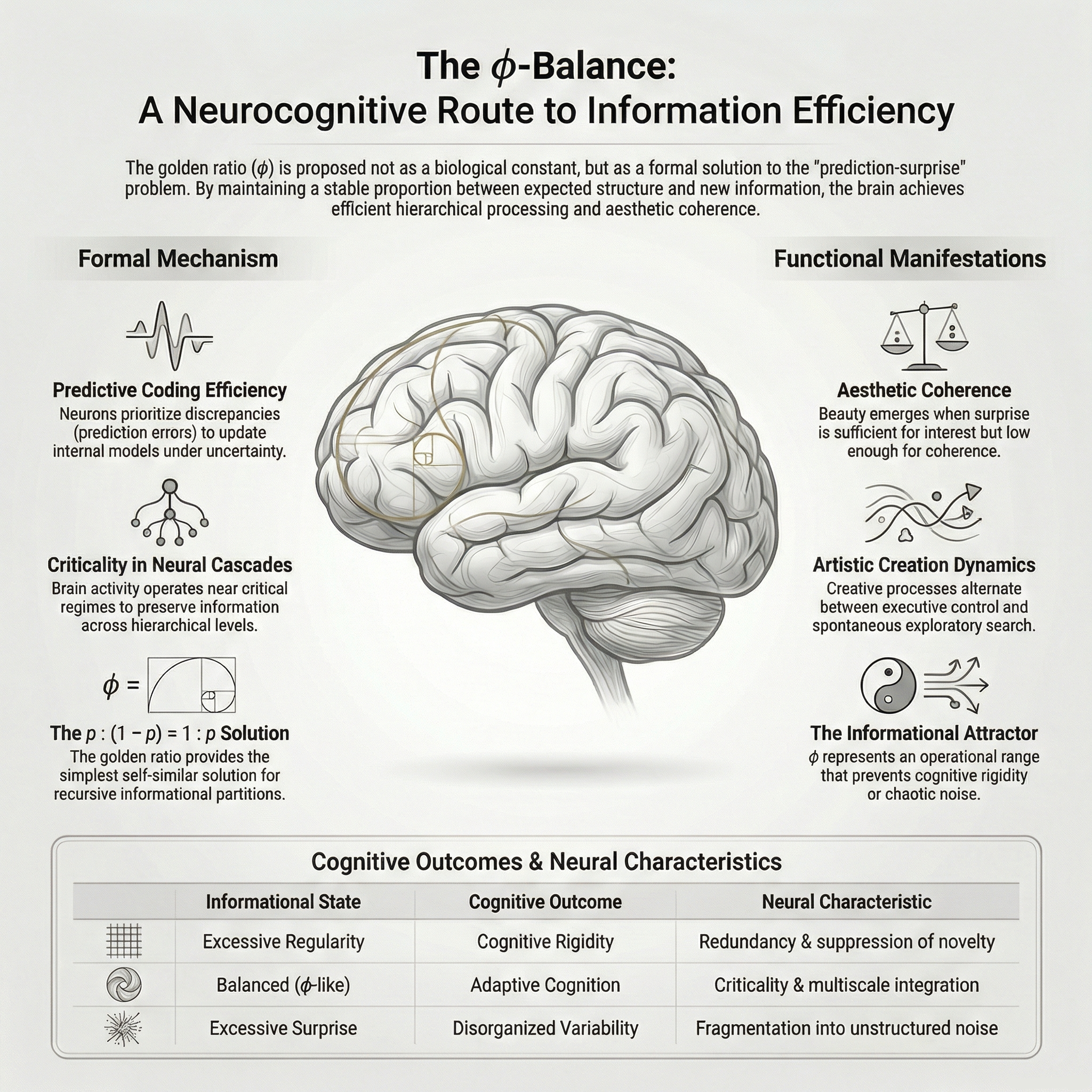}
    \caption{textbf{Self-similar informational partition and the $\varphi$-balance.} This diagram illustrates the self-similar partition defined by the relation $p:(1-p)=1:p,$ whose solution $p_\phi = \frac{1}{\varphi} \approx 0.618$ identifies a structurally distinguished informational split between explained and unexplained variance. Unlike the maximum of the balance function $$f(p)$$, this partition does not correspond to an extremum of informational imbalance, but to a recursive proportionality: the ratio of known to unknown equals the ratio of known to the whole. This self-similar structure preserves coherence under hierarchical recursion and provides a formal model of balanced predictive coding, in which internal models retain sufficient structure to guide inference while maintaining openness to residual novelty. The $\varphi$-partition therefore represents a candidate organizational principle for multiscale systems operating near criticality, where stability and variability coexist in scale-invariant proportions. For a deeper discussion see supplementary materials.
}
    \label{fig:placeholder}
\end{figure}

From a cognitive–neurological standpoint, this balance aligns with well-established mechanisms in brain function. The ratio \( p/(1 - p) \) corresponds to the relative precision assigned to internal predictions versus sensory prediction errors, a quantity dynamically modulated by neuromodulatory systems such as dopamine and noradrenaline. When the system drifts too far into certainty (\( p \to 1 \)), the capacity to learn from the environment diminishes; when it becomes overly sensitive to error (\( p \to 0 \)), coherence is lost. A corridor spanning intermediate values of \( p \), possibly structured by self-similar partitions like \( p_\varphi \), represents a region of maximal cognitive adaptivity—where model confirmation and model updating are optimally balanced. Empirical studies of resting-state brain activity support this idea, showing that power-law dynamics, maximal dynamic range, and high Fisher information co-occur near putative critical zones.

This theoretical structure finds operational form in the CIMA loop: Compute–Inference–Model–Action. This cycle enables a system to continuously assess the current explanatory state, estimate its distance from balanced partitions, recalibrate its internal model, and intervene either on itself or its environment to remain poised at the edge of criticality. In this configuration, the system transforms perturbations into learning, variance into structure, and surprise into antifragile gain.

We suggest that the golden-ratio partition is more than a mathematical curiosity. It is a candidate for a universal cognitive invariant—a principle of adaptive organization that links criticality, self-similarity, and antifragility. Systems that adhere to such a rule do not merely survive uncertainty; they cultivate it as a substrate for functional improvement. In doing so, they realize a biologically grounded form of intelligence that operates, not by eliminating error, but by orchestrating a dynamic tension between the known and the unknown.




\section*{Supplementary material (transcribed from pages 17-22 of the arXiv PDF: supplemntary_phi_submitted.pdf)}

# Supplementary materials for A golden-ratio partition of information and the balance between prediction and surprise: a neuro-cognitive route to antifragility

## A neurocognitive interpretation of φ-like partitions in perception and artistic emergence

*This Supplementary Note provides a theoretical and conceptual extension of the main manuscript. It does not introduce new empirical claims, measurements, or neurophysiological constants. Instead, it situates the formal results of the main text within established frameworks in theoretical neuroscience, predictive coding, and biological criticality, offering an interpretive scaffold rather than testable biological predictions.*

## Introduction

The recurrence of φ-like partitions across domains as diverse as psychological processing and artistic creation has often been described as an intuitive or aesthetic phenomenon. However, such recurrence can be addressed more rigorously when framed as a problem of information organization under conditions of uncertainty. Within predictive coding, cognition is not understood as a passive representation of the external world, but as an active inferential process through which the brain generates anticipatory models and continuously contrasts them with incoming sensory information, adjusting these models to minimize long-term prediction error without fully suppressing surprise (Friston, 2010; Clark, 2016).

From a neurocognitive perspective, this balance is not implemented by a single locus, but by distributed hierarchical circuits that integrate information across multiple temporal and spatial scales. Neurophysiological evidence suggests that, to sustain efficient multiscale integration, cortical activity tends to operate near critical regimes, where global stability and local variability coexist functionally. Such regimes allow information to propagate without collapsing into rigidity or degrading into noise, a condition that is central to hierarchical inference and adaptive cognition (Beggs & Plenz, 2003; Shew & Plenz, 2013).

Within this framework, the golden ratio is introduced here not as a geometric preference or biological constant, but as a candidate formalization of a functional compromise: a stable proportion between predictive structure and residual novelty that remains coherent under recursive processing. In this sense, φ can be interpreted as a candidate equilibrium point within a restricted class of self-similar informational partitions, where sufficient regularity sustains

coherent inference while sufficient uncertainty preserves sensitivity to emergent features. This interpretation aligns with approaches emphasizing informational efficiency and energetic constraints in neural processing, where optimal coding minimizes redundancy without sacrificing discriminative capacity (Laughlin, 2001; Friston, 2010).

Building on these premises, this Supplementary Note Ω develops an integrative trajectory:
 (1) it examines the prediction-surprise balance in predictive coding;
 (2) links this balance to criticality and scale invariance in hierarchical networks;
 (3) conceptualizes aesthetic perception as a case of predictive coherence under balanced complexity;
 (4) frames artistic creation as an extended inferential process alternating control and exploration; and
 (5) proposes φ as a **plausible formal attractor** of neural information flow under explicit constraints, offering a unified interpretive account of the convergence of perception, cognition, and artistic emergence (Clark, 2016; Parr et al., 2022).

## Predictive coding and the φ-balance

Within predictive coding frameworks, neuronal populations do not directly encode the external world, but instead prioritize the representation of discrepancies between expected and observed signals. These prediction errors constitute the central variable driving the updating of internal models, enabling perceptual and cognitive inference under uncertainty (Friston, 2010; Clark, 2016).

For this process to be efficient, it is not sufficient that prediction errors merely occur; the gain with which these errors influence model updating must be regulated. When error gain is excessive, neural activity becomes unstable and dominated by sensory fluctuations; when it is insufficient, the system becomes rigid, privileging prior expectations at the expense of sensitivity to incoming information. Inferential efficiency therefore depends on maintaining a stable distribution of informational variance between prediction and novelty (Hohwy, 2013).

From a formal standpoint, optimal coding can be characterized by three necessary conditions:
 (a) predictive signals explain a relatively stable proportion of total variance;
 (b) incoming information not captured by the model occupies the complementary proportion; and
 (c) the relation between these proportions is preserved under recursive hierarchical processing.
 Without such invariance, inferential coherence degrades as models scale, either by amplifying noise or by prematurely suppressing relevant information at higher hierarchical levels (Friston, 2010; Parr et al., 2022).

The self-similar relation

$$p:(1-p)=1:p$$

represents the **simplest non-trivial solution** satisfying these conditions under a minimal self-similarity constraint. Its solution corresponds to the golden ratio. Importantly, this relation should be understood as a **formal fixed point** of this constraint, not as a unique solution nor as an empirically imposed proportion. In this sense, φ functions as a minimal formal solution that preserves inferential coherence under hierarchical recursion, rather than as a biologically mandated constant (Friston, 2010; Laughlin, 2001).

## Criticality and hierarchical scaling

Efficient predictive coding cannot be sustained under arbitrary neural dynamics. For a stable distribution of variance between prediction and novelty to be preserved across hierarchical processing, neuronal networks must operate near critical regimes. Empirical studies of neural avalanches and scale-free connectivity show that cortical activity consistently occupies an intermediate regime between order and randomness, where stability and variability coexist functionally (Beggs & Plenz, 2003; Plenz & Thiagarajan, 2007; Shew & Plenz, 2013).

In critical regimes, neuronal correlations follow power-law distributions, allowing information to propagate across multiple scales without attenuation or explosive amplification. This property is essential for hierarchical inference: predictions generated at higher levels must remain relevant at lower levels, while local errors must be able to scale without losing informational value (Plenz & Thiagarajan, 2007; Deco & Jirsa, 2012).

From a formal perspective, criticality **can be interpreted** as a structural requirement for preserving scale invariance, which in turn constitutes a necessary condition for maintaining self-similar relations under recursive hierarchical processing. Without such invariance, any stable proportion between predictive structure and novelty would collapse as information traverses cortical levels. In this sense, criticality is not merely a byproduct of neural processing, but a plausible organizational regime enabling multiscale inferential coherence (Shew & Plenz, 2013).

## Aesthetic perception as predictive coherence

Aesthetic perception does not constitute a domain separate from ordinary cognition, but rather a particular case in which predictive coding mechanisms reach a regime of especially stable inferential coherence. Aesthetic experience emerges when the perceptual system integrates structural regularities and unexpected variations such that prediction error is reduced without being eliminated, preserving a controlled degree of uncertainty that sustains attentional engagement (Leder et al., 2004; Van de Cruys & Wagemans, 2011).

For this experience to arise, it is not sufficient for a stimulus to exhibit complexity or symmetry. The distribution of informational variance between what is predictable and what is novel must fall within a narrow functional range. Excessively regular stimuli concentrate variance in

prediction and generate perceptual redundancy, whereas highly unpredictable stimuli shift variance toward error, producing disorganization and loss of meaning (Van de Cruys, 2017).

Cognitive neuroscience studies indicate that, under these conditions, coordinated activation occurs across networks involved in meaning integration, salience attribution, and affective evaluation. Functional synchronization between the default-mode network and the salience network is associated with states in which surprise is sufficient to generate interest, but not so high as to prevent the formation of coherent expectations (Chatterjee & Vartanian, 2014; Vessel et al., 2019).

*Importantly, this account does not posit a specific numerical proportion governing aesthetic perception, but rather identifies a narrow functional range in which predictive coherence and novelty coexist.*

# Artistic creation as extended predictive coding

Artistic creation can be conceptualized as a limiting case of inferential exploration, in which general cognitive principles operate under conditions of heightened uncertainty and openness. Rather than constituting a rupture with ordinary inferential mechanisms, the creative process represents an intensification of these mechanisms, characterized by the deliberate introduction of variations, evaluation of deviations, and progressive reorganization of internal models (Friston, 2010).

Neuroimaging evidence supports this formulation by showing that, during artistic creation, there is a functional alternation between networks associated with executive control and networks implicated in spontaneous exploration, internal simulation, and free association. This dynamic reflects a flexible allocation of cognitive resources between control and openness, which is necessary to sustain the creative process in a stable and productive manner (Beaty et al., 2016).

From a formal perspective, this oscillation can be described as the preservation of a stable relationship between prediction and surprise throughout the creative process. Within this framework, a φ-like balance may serve as a **useful formal abstraction** of the optimal compromise between regularity and novelty: it allows each deviation to be integrated into an expanded model, favoring sequences of coherent transformations rather than abrupt or chaotic ruptures. Thus, φ is not proposed as a fixed empirical proportion, but as a formal attractor describing how creative systems maintain coherence while remaining open to emergence (Laughlin, 2001; Friston, 2010).

# Unified mechanism: φ as an attractor of neural information flow

If neurocognitive systems seek simultaneously to minimize irrelevant surprise, maximize meaningful novelty, and preserve hierarchical coherence, only certain distributions between prediction and error prove functional. Under these conditions, φ may be understood as a **formal attractor in the space of informational descriptions of neural inference**, rather than as a physiological or dynamical attractor in neural state space (Friston, 2010; Parr et al., 2022).

This convergence can be observed across perception, cognition, decision-making, and artistic creation at an interpretive level. In each domain, adaptive functioning deteriorates when informational flow becomes trapped in excessive redundancy or fragments into unstructured noise. A φ-like balance delineates an operational range in which information can propagate without collapse, allowing continuous adaptation without loss of coherence (Shew & Plenz, 2013; Clark, 2016).

From a neuropsychological perspective, persistent deviations from this regime are associated with cognitive rigidity, disorganized variability, or difficulties integrating information across multiple scales, reinforcing the view that such balances constitute functional conditions for adaptive cognition rather than fixed numerical laws (Stuss & Knight, 2013; Uddin, 2021).

# Conclusions

The trajectory developed in this Supplementary Note Ω articulates a coherent interpretive framework for the recurrent appearance of φ-like partitions in psychological processing and artistic creation. Rather than reflecting an arbitrary aesthetic preference, these partitions can be understood as expressions of neurocognitive principles governing efficient information management under uncertainty (Friston, 2010; Clark, 2016).

The available evidence suggests that such balances can be sustained only when neuronal networks operate near critical regimes in which stability and variability coexist functionally. In these contexts, φ emerges as an efficient **formal configuration** for preserving inferential coherence across hierarchical levels, not as an empirically fixed biological parameter (Beggs & Plenz, 2003; Shew & Plenz, 2013; Laughlin, 2001).

From this perspective, aesthetic perception and artistic creation can be conceived as particular expressions of a shared neurocognitive architecture. In both cases, meaning emerges when novelty is integrated without destroying coherence, and when structure flexes without collapsing into rigidity. Thus, φ appears here as a **formal signature of adaptive systems** capable of remaining stable while transforming.



\end{document}